\date{\today}
\title{Initial values of ML-degree polynomials}
\author{Maciej Ga\l{}\k{a}zka}
\date{\today}
\newtheorem{theorem}{Theorem}[section]
\newtheorem{proposition}[theorem]{Proposition}
\newtheorem{lemma}[theorem]{Lemma}
\newtheorem{conjecture}[theorem]{Conjecture}
\newtheorem{corollary}[theorem]{Corollary}
\theoremstyle{definition}
\newtheorem{definition}[theorem]{Definition}
\theoremstyle{remark}
\newcommand{\Pf}{\operatorname{Pf}}
\DeclareMathOperator{\ShHom}{\mathscr{H}\text{\kern -3pt {\calligra\large om}}\,}
\DeclareMathOperator{\ShDer}{\mathscr{D}\text{\kern -3pt {\calligra\large er}}\,}
\DeclareMathOperator{\ShSpec}{\mathscr{S}\text{\kern -3pt {\calligra\large pec}}\,}
\renewcommand{\mathcal}[1]{\mathscr{#1}}
\begin{document}
\author{Maciej Ga\l{}\k{a}zka\thanks{ Institute of Information Technology, Warsaw University of Life Sciences, ul.
  Nowoursynowska 159, 02-776, Warsaw, Poland, \\ \hspace{10cm} Universit\`a di Trento, via Sommarive, 38123 Povo 14, TN, Italy}
}
\maketitle
\abstract{We prove a conjecture about the initial values of ML-degree polynomials stated by
Micha{\l}ek, Monin, and Wi\'sniewski.}
\section{Introduction}
Maximum likelihood degree is one of the central invariants in algebraic statistics
\cite{sturmfels_uhler}, \cite{sullivant}. Recently, for generic linear concentration models Manivel
et al.\ \cite{complete_quadrics} proved polynomiality of the maximum likelihood degree function
$\phi(n,d)$ for fixed $d$. The proof relied on relations of ML-degree to other branches of
mathematics:
\begin{itemize}
  \item intersection theory \cite{complete_quadrics_procesi} (Segre classes  of the second symmetric power of the universal bundle
    over the Grassmannian),
  \item representation theory,
  \item semidefinite programming (algebraic degree of semidefinite programming
    \cite{nie_ranestad_sturmfels}),
  \item theory of symmetric polynomials \cite{pragacz_book},\cite{laksov_lascoux_thorup}.
\end{itemize}
This work may be also regarded as a symmetric matrix parallel of the groundbreaking work
\cite{adiprasito_huh_katz} by Adiprasito, Huh, Katz  and \cite{milnor_numbers_huh} by Huh, which
would correspond to diagonal matrices. From this perspective, $\phi(n, d)$ is the analogue of the
$\beta$-invariant of a uniform matroid, which is simply the binomial coefficient
\cite{applications_of_intersection_theory}.

The function $\phi(n,d)$ has  a very nice enumerative geometric definition as the number of quadrics in $n$ variables
that pass through $d$ general points and are tangent to $\binom{n+1}{2} - d-1$ general hyperplanes. Therefore,
polynomiality results can be quite surprising, as each time $n$ changes, so does the ambient space. However, once it is
established, it allows us to extend the scope of parameters where the question is meaningful to arbitrary integral $n$.
If $n$ is positive, but $d > \binom{n+1}{2} -1$, nothing interesting happens, both the geometric interpretation, and the
polynomial have value $0$.  Following a well-established program in mathematics called combinatorial reciprocity
\cite{combinatorial_reciprocity_theorems}, one can proceed to see what happens for $n \leq 0$.

In \cite[page 83]{maximum_likelihood_degree} simple formulas were conjectured for
$\phi(0,d)$ and $\phi(-1,d)$.
\begin{theorem}
  \label{theorem:main_introduction}
  \begin{align*}
    \phi(0, d) &= (-1)^{d-1}\text{,} \\
    \phi(-1, d) &= (-2)^{d-1}\text{.}
  \end{align*}
\end{theorem}
The goal of the article is to prove Theorem \ref{theorem:main_introduction}. Additionally, in
Section \ref{section:conjecture}, we state a conjecture about the values $\phi(-n, d)$ for $n \geq 1$.

Our tools include Lascoux coefficients $\psi_I$ and Lascoux polynomials $LP_I(n)$. The former appear
for instance in \cite[Proposition A.15]{laksov_lascoux_thorup}, \cite[Example
7.10]{pragacz_enumerative}. They can be defined for any finite set of non-negative integers $I$
using the following recursive formula
\begin{align}\label{equation:psi_definition}
  \begin{split}
  \psi_{\{i\}} &= 2^i\text{,} \\
  (r+1)\psi_{\{j_0,\dots,j_r\}} - 2\sum_{l = 1}^r \psi_{\{j_0,\dots,j_l -1,\dots,j_r\}} &=
  \begin{cases}\psi_{\{j_1,\dots,j_r\}} & \text{ if } j_0 = 0\text{,} \\ 0 & \text{
  otherwise,}\end{cases} 
\end{split}
\end{align}
where $0 \leq j_0 < j_1 <\dots< j_r$ and the sum is over all $l$ for which $j_l -1 > j_{l-1}$.

Let $[n]$ denote the set $\{0,1,\dots,n-1\}$. In \cite[Theorem 4.3]{complete_quadrics}, the authors noticed that the function given by the
formula
\begin{equation*}
  LP_I(n) = \begin{cases}\psi_{[n]\setminus I} &  \text{ if } I\subseteq [n]\text{,} \\
  0 & \text{ otherwise.}\end{cases}
\end{equation*}
is a polynomial for every $I$. Following them, we call it the Lascoux polynomial.

The following formula, which is proven in \cite[Corollary 3.6, Theorem 3.7]{complete_quadrics}, expresses the ML degree
polynomial in terms of the Lascoux coefficients and Lascoux polynomials.
  \begin{equation}
    \label{equation:formula_for_phi}
    \phi(n, d) = \sum_{s: \binom{s+1}{2} \leq d} \frac{s}{n}\sum_{\# I = s, \Sigma I = d-s} \psi_I
    LP_I(n)
  \end{equation}
  Here $I$ is always a set of non-negative integers, and the numbers $\#I$ and $\Sigma I$ are the cardinality and the sum of $I$, respectively.
\subsection{Acknowledgments}
The author would like to thank Mateusz Micha{\l}ek for suggesting this project and for constant
support. The author was supported by the National Science Center, Poland, project number
2017/26/E/ST1/00231, and the University of Konstanz, by which he was employed during the months of
June--July 2022.

Funded by the European Union under NextGenerationEU. PRIN 2022 Prot. n. 2022ZRRL4C\_004. Views and opinions expressed are
however those of the author(s) only and do not necessarily reflect those of the European Union or European Commission.
Neither the European Union nor the granting authority can be held responsible for them.

\section{Calculations}
We begin by stating useful formulas. The following proposition and its two corollaries are
well known. The first one is a version of the result \cite[Proposition 7.12]{pragacz_enumerative}
stated for Lascoux polynomials
\begin{proposition}[Lemma 5.7 of \cite{complete_quadrics}]
  Let $I = \{i_1,\dots,i_r\}$ be an increasing sequence of non-negative integers. Then
  \begin{align*}
  LP_I(n) &= \Pf((LP_{\{i_k, i_l\}}(n))_{0 < k,l \leq r}) \text{ for even } \#I\text{,} \\
  LP_I(n) &= \Pf((LP_{\{i_k, i_l\}}(n))_{0 \leq k,l \leq r}) \text{ for odd } \#I\text{,}
  \end{align*}
  where $LP_{\{i_0,i_k\}}(n)$ is by definition $LP_{\{i_k\}}(n)$, and $\Pf$ is the Pfaffian. 
  \label{proposition:pfaffians}
\end{proposition}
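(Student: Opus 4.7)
The plan is to first reduce the claim to a purely combinatorial identity among the Lascoux coefficients $\psi_I$, and then prove that identity by induction on $\#I$. Both sides of the desired formula are polynomials in $n$—the right-hand side being a Pfaffian with polynomial entries—so it suffices to verify equality for all $n$ with $I\subseteq [n]$. In that range, $LP_I(n) = \psi_{[n]\setminus I}$ and $LP_{\{i_k,i_l\}}(n) = \psi_{[n]\setminus\{i_k,i_l\}}$, so the statement becomes a Pfaffian identity relating $\psi$-coefficients of certain explicit subsets.

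I would prove the resulting combinatorial identity by induction on $\#I$. The base cases $\#I \leq 2$ follow immediately from the definitions together with the convention $LP_{\{i_0,i_k\}}(n) = LP_{\{i_k\}}(n)$ in the odd-cardinality case. For the inductive step, the cleanest strategy is to show that both sides satisfy the same recursion \eqref{equation:psi_definition}: since $\psi$ is determined uniquely by this recursion together with the initial values $\psi_{\{i\}} = 2^i$, it suffices to check that the Pfaffian expression satisfies the recursion as well. This reduces to an identity of the schematic form $(r+1)\Pf(A) - 2\sum_l \Pf(A^{(l)}) = \Pf(A')$ (or $0$), where the matrices $A^{(l)}$ and $A'$ encode the index-shifts appearing in the recursion. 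Such Pfaffian identities can be attacked via the Laplace-style cofactor expansion together with the classical quadratic Pl\"ucker relations among Pfaffians, and the smaller Pfaffians appearing will be supplied by the induction hypothesis.

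The main obstacle will be verifying this Pfaffian identity cleanly: the restriction in the $\psi$-recursion to those indices $l$ with $j_l - 1 > j_{l-1}$ corresponds to certain entries of the underlying antisymmetric matrix becoming equal and hence forcing cancellations in the Pfaffian expansion, and tracking these cancellations along with the cofactor signs is delicate. An alternative, and probably shorter, route is to transfer the known Pfaffian identity for Pragacz's $\tilde{P}$-polynomials from \cite[Proposition 7.12]{pragacz_enumerative} directly to the Lascoux setting via the linear dictionary between the two families, thereby reducing the statement to a classical identity in the theory of symmetric functions at the cost of invoking additional external machinery.
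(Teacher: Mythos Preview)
The paper does not supply its own proof of this proposition: it is stated with the attribution ``Lemma~5.7 of \cite{complete_quadrics}'' and the surrounding text notes that it is ``a version of the result \cite[Proposition~7.12]{pragacz_enumerative} stated for Lascoux polynomials.'' In other words, the paper's proof is a citation. Your alternative route at the end---transferring Pragacz's Pfaffian identity for $\tilde{P}$-polynomials to the Lascoux setting via the linear dictionary between the two families---is precisely the path the cited literature takes, so that part of your proposal matches the paper's (implicit) approach exactly.

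Your primary route, verifying directly that the Pfaffian expression obeys the defining recursion~\eqref{equation:psi_definition}, is a genuinely different and more self-contained strategy. It is a reasonable plan, and your reduction ``both sides are polynomials in $n$, so check equality for $n$ with $I\subseteq[n]$'' is correct. However, as you yourself flag, the sketch stops short of the actual work: the step ``such Pfaffian identities can be attacked via the Laplace-style cofactor expansion together with the classical quadratic Pl\"ucker relations'' is an assertion of feasibility rather than an argument, and the bookkeeping created by the constraint $j_l-1>j_{l-1}$ in the recursion is exactly the delicate part. So as written, the first route is a promising outline with an acknowledged gap, while the second route is the one the literature (and hence the paper, by citation) actually uses.
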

\begin{corollary}
  For an increasing sequence of non-negative integers $\{i_1,\dots,i_r\}$
  \begin{equation*}
    n^{\lceil \frac{r}{2}\rceil} \mid LP_{\{i_1,\dots,i_r\}}(n)\text{.}
  \end{equation*}
  \label{corollary:high_power_n_divides}
\end{corollary}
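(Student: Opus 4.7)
The plan is to reduce this statement to the cases $\#I \in \{1,2\}$ via the Pfaffian formula of Proposition \ref{proposition:pfaffians}, and to handle those base cases by a trivial vanishing observation.

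First, I would establish the following one-line lemma: for every nonempty finite set $J$ of nonnegative integers, $n$ divides $LP_J(n)$ as a polynomial in $n$. Indeed, $LP_J(0) = 0$, because $J$ nonempty cannot be a subset of $[0] = \emptyset$, so by the definition of $LP_J$ as a case distinction we get $LP_J(0) = 0$. Since $LP_J(n)$ is a polynomial in $n$ (as recalled just before the proposition), the vanishing at $n = 0$ forces $n \mid LP_J(n)$. In particular, both $LP_{\{i_k\}}(n)$ and $LP_{\{i_k, i_l\}}(n)$ are divisible by $n$.

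Next, I would apply Proposition \ref{proposition:pfaffians} and expand the Pfaffian as a signed sum over perfect matchings. For even $r = 2m$, the Pfaffian is taken on a $2m \times 2m$ skew-symmetric matrix whose entries are of the form $LP_{\{i_k, i_l\}}(n)$. Each summand in the Pfaffian expansion is, up to sign, a product of exactly $m$ such entries, and each factor is divisible by $n$ by the lemma. Hence the whole Pfaffian is divisible by $n^m = n^{\lceil r/2 \rceil}$. For odd $r = 2m+1$, the Pfaffian is taken on a $(2m+2) \times (2m+2)$ matrix whose entries are $LP_{\{i_k, i_l\}}(n)$ or (in the $0$-th row/column) $LP_{\{i_k\}}(n)$, and each summand is a product of exactly $m+1$ entries of this type. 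Again each factor is divisible by $n$, so $LP_I(n)$ is divisible by $n^{m+1} = n^{\lceil r/2 \rceil}$.

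I do not expect any serious obstacle: the only substantive input is the already-stated Pfaffian identity, and the divisibility of each Pfaffian entry by $n$ is automatic from the case distinction defining $LP_J$. The one point to be a bit careful about is to verify the counting $\lceil r/2 \rceil$ of factors per Pfaffian term in the odd case, where the matrix size is $r+1$ rather than $r$; this is a direct check.
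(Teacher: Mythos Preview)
Your proposal is correct and matches the paper's own argument: the paper's proof is the one-line remark that $n \mid LP_{\{i_k\}}(n)$ and $n \mid LP_{\{i_k,i_l\}}(n)$ ``by definition'', with the Pfaffian formula from Proposition~\ref{proposition:pfaffians} then giving the claimed power of $n$ exactly as you describe. You have simply spelled out the details (why $LP_J(0)=0$ and the factor count in each Pfaffian term) that the paper leaves implicit.
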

\begin{proof}
  Straightforward, because by $n \mid LP_{\{i_k\}}(n)$ and $ n \mid LP_{\{i_k, i_l\}}(n)$ by definition.
\end{proof}
\begin{corollary}
  For an increasing sequence of non-negative integers $\{i_1,\dots,i_r\}$
  \begin{equation*}
    (n+1)^{\lfloor \frac{r}{2}\rfloor} \mid LP_{\{i_1,\dots,i_r\}}(n)\text{.}
  \end{equation*}
  \label{corollary:high_power_n_plus_one_divides}
\end{corollary}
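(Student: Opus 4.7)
The plan is to mimic the proof of Corollary~\ref{corollary:high_power_n_divides}, but keeping track only of the factors of $(n+1)$ contributed by the \emph{pair} entries in the Pfaffian expansion. By Proposition~\ref{proposition:pfaffians}, when $r$ is even $LP_I(n)$ is the Pfaffian of the $r \times r$ matrix $\bigl(LP_{\{i_k, i_l\}}(n)\bigr)_{0 < k, l \leq r}$, so every term of the Pfaffian expansion is a product of exactly $r/2$ pair entries. When $r$ is odd, $LP_I(n)$ is the Pfaffian of an $(r+1) \times (r+1)$ matrix whose zeroth row and column consist of singleton values $LP_{\{i_k\}}(n)$; every Pfaffian term then contains exactly one singleton factor together with $(r-1)/2$ pair factors.

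Granting the claim $(n+1) \mid LP_{\{i, j\}}(n)$ for every pair $0 \leq i < j$, each Pfaffian term carries $(n+1)^{r/2}$ in the even case and $(n+1)^{(r-1)/2}$ in the odd case, and both exponents equal $\lfloor r/2 \rfloor$. This gives the desired divisibility of $LP_I(n)$ and reduces the corollary to the single polynomial identity $LP_{\{i, j\}}(-1) = 0$ for every pair $0 \leq i < j$. Note that the singleton entries $LP_{\{i_k\}}(n)$ are \emph{not} assumed $(n+1)$-divisible, which is exactly why the exponent drops from $\lceil r/2 \rceil$ in Corollary~\ref{corollary:high_power_n_divides} to $\lfloor r/2 \rfloor$ here.

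The main obstacle is the pair-case identity, which does \emph{not} follow from the definition: setting $n \leq j$ yields only the roots $n = 0, 1, \ldots, j$, none of them equal to $-1$. I would attempt to prove it by pinning down $LP_{\{i, j\}}(n)$ explicitly as a polynomial, using the recursion~\eqref{equation:psi_definition} to compute $\psi_{[n] \setminus \{i, j\}}$ for enough values $n \geq j+1$ to interpolate, and then evaluating at $n = -1$. Alternatively, one might hope for a reciprocity in the spirit of \cite{combinatorial_reciprocity_theorems} identifying $LP_{\{i, j\}}(-1)$ with a known vanishing quantity, or extract a symbolic factor $(n+1)$ directly from manipulating the recursion for $\psi$. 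Once the pair-case identity is established, the Pfaffian bookkeeping of the first two paragraphs closes the proof.
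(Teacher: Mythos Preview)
Your reduction via Proposition~\ref{proposition:pfaffians} is exactly the paper's approach, and your bookkeeping is correct: every Pfaffian term contributes $\lfloor r/2\rfloor$ pair factors. The only issue is that you treat the pair case $(n+1)\mid LP_{\{i,j\}}(n)$ as an open obstacle and propose interpolation or reciprocity to settle it. In the paper this step is a one-line citation: \cite[Example~4.4]{complete_quadrics} gives the closed formula
\[
  LP_{\{i,j\}}(n)=\frac{(j-i)\,[n+1]_{j+2}}{(i+1)!\,(j+1)!\,(i+j+2)!}\sum_{d=0}^{i}(-1)^{d}a_{i,d}(i+j+1-d)!\,[n]_{i-d},
\]
(quoted again in the proof of Proposition~\ref{proposition:value_at_zero}), and the prefactor $[n+1]_{j+2}=(n+1)n(n-1)\cdots(n-j)$ visibly contains $(n+1)$. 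So there is nothing to interpolate or guess; once you invoke that formula the proof is complete and identical to the paper's.
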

\begin{proof}
  Here $n+1 \mid LP_{\{i_k, i_l\}}$ by one of the formulas from \cite[Example 4.4]{complete_quadrics}.
\end{proof}
As a warm-up, we calculate the value $\phi(-1,d)$.
\begin{proposition}
  \begin{equation*}
    \phi(-1,d) = (-2)^{d-1}\text{.}
  \end{equation*}
  \label{propostion:value_minus_one}
\end{proposition}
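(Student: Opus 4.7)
The plan is to evaluate formula \eqref{equation:formula_for_phi} at $n = -1$ and observe that it collapses almost entirely. The divisibility statement of Corollary \ref{corollary:high_power_n_plus_one_divides} is the crucial input: since $(n+1)^{\lfloor r/2 \rfloor}$ divides $LP_{\{i_1,\dots,i_r\}}(n)$, every term with $\#I \geq 2$ vanishes when we plug in $n=-1$. The $s = 0$ term is killed by the factor of $s$ sitting in the numerator. Thus only the singleton strata survive.

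For $s = 1$, the condition $\Sigma I = d - 1$ forces $I = \{d-1\}$, and so
\[
  \phi(-1, d) = -\,\psi_{\{d-1\}}\, LP_{\{d-1\}}(-1).
\]
From \eqref{equation:psi_definition} we have $\psi_{\{d-1\}} = 2^{d-1}$, so what remains is to compute $LP_{\{d-1\}}(-1)$. Using the same formula from \cite[Example 4.4]{complete_quadrics} that was invoked in the proof of Proposition \ref{proposition:value_at_zero}, one has
\[
  LP_{\{i\}}(n) = \frac{n(n-1)\cdots(n-i)}{(i+1)!},
\]
so $LP_{\{d-1\}}(-1) = \frac{(-1)(-2)\cdots(-d)}{d!} = (-1)^{d}$. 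Substituting gives $\phi(-1,d) = -2^{d-1}\cdot(-1)^{d} = (-1)^{d-1} 2^{d-1} = (-2)^{d-1}$, as required.

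In contrast with the proof of Theorem \ref{theorem:zero}, here essentially no combinatorial identity is needed: the whole argument is a one-line consequence of the vanishing forced by Corollary \ref{corollary:high_power_n_plus_one_divides}, together with the known closed form for $LP_{\{i\}}(n)$. There is really no main obstacle; the only thing to watch is that the $s=0$ stratum be legitimately excluded and that the formula \eqref{equation:formula_for_phi} be applied with its correct index conventions, both of which are straightforward.
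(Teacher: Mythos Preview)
Your proof is correct and follows exactly the same approach as the paper: invoke Corollary~\ref{corollary:high_power_n_plus_one_divides} so that only the $s=1$ term of \eqref{equation:formula_for_phi} survives at $n=-1$, then evaluate $LP_{\{d-1\}}(-1)$ via the closed form from \cite[Example 4.4]{complete_quadrics}. The paper's version is a one-line condensation of this same computation.
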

\begin{proof}
  Due to Corollary \ref{corollary:high_power_n_plus_one_divides} Equation \eqref{equation:formula_for_phi} has a simple form:
  \begin{equation*}
    \phi(-1,d) = \frac{1}{n} \psi_{\{d-1\}} LP_{\{d-1\}}(-1)
    = 2^{d-1} \frac{(-2)(-3)\cdots(-(d-1)-1)}{d!} = (-2)^{d-1}\text{.}
  \end{equation*}
\end{proof}

Since the polynomial $LP_I(n)$ is divisible by $n$ for $I \neq \emptyset$, we introduce the
following definition.
\begin{definition}
  If $I\neq \emptyset$, let
  \begin{equation*}
    \widetilde{LP}_I(n) = \frac{{LP}_I(n)}{n}\text{.}
  \end{equation*}
  \label{definition:reduced_lascoux_polynomial}
\end{definition}
We also define for $i, j \in \mathbb{N}$:
\begin{definition}
  \begin{equation*}
    C(i,j) = \frac{(j-i)(-1)^{i+j}}{(i+1)(j+1)\binom{i+j+2}{i+1}}\text{.}
  \end{equation*}
  \label{definition:mc}
\end{definition}
This is motivated by the following proposition.
\begin{proposition}
  For a non-negative integer $i$
  \begin{equation*}
    \widetilde{LP}_{\{i\}}(0) = \frac{(-1)^i}{i+1}\text{.}
  \end{equation*}
  For two non-negative integers $i,j$ with $i < j$
  \begin{equation*}
    \widetilde{LP}_{\{i,j\}}(0) = C(i,j)\text{.}
  \end{equation*}
  \label{proposition:value_at_zero}
\end{proposition}
\begin{proof}
  
By \cite[Example 4.4]{complete_quadrics}
\begin{equation*}
  \widetilde{LP}_{\{i\}}(0) = \frac{(-1)(-2)\cdots(-i)}{(i+1)!} =  \frac{(-1)^i}{i+1}\text{,}
\end{equation*}
which proves the first assertion.

By another formula in the same example
\begin{equation*}
  LP_{\{i,j\}}(n) = \frac{(j-i)[n+1]_{j+2}}{(i+1)!(j+1)!(i+j+2)!} \sum_{d=0}^i (-1)^da_{i,d}(i+j+1-d)![n]_{i-d}\text{,}
\end{equation*}
where $a_{i,d} = \prod_{k=0}^{d-1}(i-k)(i-k+1)$ and $[n]_d = n(n-1)\cdots(n-d+1)$. When we set $n=
0$ in the sum 
\begin{equation}
  \sum_{d=0}^i (-1)^d a_{i,d}(i+j+1-d)![n]_{i-d}\text{,}
  \label{equation:sum}
\end{equation}
it is non-zero only for $i = d$. Therefore, the sum \eqref{equation:sum} becomes equal to
$(-1)^i(j+1)!i!(i+1)!$. Hence,
\begin{align*}
  \widetilde{LP}_{\{i,j\}}(0)&= \frac{(j-i) j!(-1)^j}{(i+1)!(j+1)!(i+j+2)!} (-1)^i i! (i+1)!(j+1)! \\
  &= \frac{(j-i)(-1)^{i+j}}{(i+1)(j+1)\binom{i + j+2}{i+1}}\text{.}
\end{align*}
\end{proof}
We leave the proof of the following easy lemma to the reader.
\begin{lemma}
  \label{lemma:mc_formula}
  We have
  \begin{equation}
    C(i+1,j) + C(i, j+1) = -C(i,j)
  \end{equation}
for every non-negative $i,j$.
\end{lemma}

In order to make the proof as simple as possible, we extend the definition of $\psi$ to possibly non-ascending sequences of
numbers of length $2$.
\begin{proposition}
  \label{proposition:identity}
  If we define 
  \begin{enumerate}[(i)]
    \item $\psi_{a,b} = \psi_{\{a,b\}}$ for $a,b \in \mathbb{N}$ with $a < b$,
    \item $\psi_{a,a} = 0$ for $a \in \mathbb{N}$,
    \item $\psi_{a,b} = -\psi_{b,a}$ for $a, b\in \mathbb{N}$ with $a > b$,
    \item $\psi_{-1, a} = -\psi_{a, -1} = 2^{a-1}$ for $a \in \mathbb{N}$,
  \end{enumerate}
  the following identity holds for all $a, b \in \mathbb{N}$
  \begin{equation}
    \label{equation:crucial_identity}
    \psi_{a,b} = \psi_{a-1,b} + \psi_{a,b-1}\text{.}
  \end{equation}
\end{proposition}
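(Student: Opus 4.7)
The plan is to verify \eqref{equation:crucial_identity} by a case analysis on the ordering of $a$ and $b$, reducing each case to the defining recursion \eqref{equation:psi_definition} specialized to a two-element set, combined with the extended conventions (i)--(iv).

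If $a = b$, both sides vanish: the left by (ii), and the right because (iii) gives $\psi_{a,a-1} = -\psi_{a-1,a}$, so the two summands cancel. The corner $a = b = 0$ is handled analogously via (iv), since $\psi_{-1,0} + \psi_{0,-1} = 2^{-1} - 2^{-1} = 0$. If $a > b$, applying (iii) to each of the three terms in \eqref{equation:crucial_identity} turns the equation into the same identity with $a$ and $b$ interchanged, reducing it to the case $a < b$ (and, when $b = 0$, making use of (iv) for $\psi_{a,-1}$).

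The essential case is therefore $a < b$. Here I would specialize \eqref{equation:psi_definition} to the two-element set $\{a,b\}$ (so $r = 1$, $j_0 = a$, $j_1 = b$) and split into four subcases according to whether $a = 0$ (which controls the presence of the $\psi_{\{b\}}$ term on the right of \eqref{equation:psi_definition}) and whether $b = a+1$ (which controls whether the shift at the position of $j_1$ is admissible). In each of the resulting subcases the Pascal-type identity \eqref{equation:crucial_identity} drops out directly, after using (iv) to identify $\psi_{-1,b} = 2^{b-1}$ with half of $\psi_{\{b\}} = 2^{b}$, i.e., with exactly what the $j_0 = 0$ right-hand side of \eqref{equation:psi_definition} contributes.

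The main obstacle is really only the bookkeeping across these subcases. The boundary values fixed in (iv) have been normalized precisely so that the Pascal-like recurrence \eqref{equation:crucial_identity} holds uniformly, and that choice is the whole reason for extending $\psi$ to admit the index $-1$; once the extension is in place, the proof reduces to noting that \eqref{equation:psi_definition} at $r = 1$ \emph{is} exactly the sought relation, rewritten through (i)--(iv).
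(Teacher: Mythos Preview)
Your argument is correct and follows essentially the same route as the paper: a case split on the ordering of $a,b$, with the $a>b$ case reduced to $a<b$ via antisymmetry, and the core case $a<b$ read off from the $r=1$ instance of the defining recursion \eqref{equation:psi_definition} together with the boundary conventions. The only cosmetic difference is that the paper isolates the situation ``at least one of $a,b$ equals $0$'' and dispatches it using the closed form $\psi_{0,c}=2^{c}-1$, whereas you keep $a=0$ inside your four-subcase analysis of $a<b$ and use \eqref{equation:psi_definition} directly (the $j_0=0$ branch producing $\psi_{\{b\}}=2^{b}$, which matches $2\psi_{-1,b}$ via (iv)); both organizations are equivalent in content and length.
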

\begin{proof}
  By Points (i), (ii) and Equation \eqref{equation:psi_definition}, the identity holds for $1 \leq a <
  b$. By Points (ii) and (iii), it holds for $1 \leq a = b$. Assume $a > b \geq 1$. Then
  \begin{equation*}
    \psi_{a,b} = -\psi_{b,a} = -\psi_{b-1,a} -\psi_{b, a-1} = \psi_{a, b-1} + \psi_{a-1,b}\text{.}
  \end{equation*}
  This proves the identity for $a > b \geq 1$. Suppose at least one of the integers $a, b$ is equal
  to $0$. But then Equation \eqref{equation:crucial_identity} follows from the fact that $\psi_{0,
  a} = 2^a - 1$ for $a \in \mathbb{N}$ and from Point (iv).
\end{proof}
\begin{proposition}
  \label{proposition:sum}
  The following formula holds
  \begin{equation*}
    \sum_{i = 0}^{d+1}\psi_{i,d+1-i} C(i, d+1-i) + \sum_{i=0}^d \psi_{i,d-i} C(i,d-i) =
    \frac{2^{d+1}(-1)^{d+1} (d+1)}{(d+2)(d+3)}\text{.}
  \end{equation*}
\end{proposition}
\begin{proof}
  We have
  \begin{align*}
    \sum_{i=0}^{d+1} \psi_{i,d+1-i} C(i, d+1 -i) \stackrel{\text{by Proposition
    \ref{proposition:identity}}}{=} \hspace{0.1cm} &\sum_{i=0}^{d+1} (\psi_{i-1,d+1-i} +
    \psi_{i,d-i}) C(i, d+1 -i) \\
    \stackrel{\text{changing the indices}}{=} &\sum_{i=-1}^d \psi_{i,d-i} C(i+1, d-i) + \sum_{i = 0}^{d+1} \psi_{i, d-i} C(i,d+1-i) \\
    \stackrel{\text{regrouping}}{=}\hspace{0.57cm} &\sum_{i=0}^d \psi_{i,d-i}(C(i+1, d-i) + C(i,d+1-i)) \\ &+ \psi_{-1,d+1}C(0,d+1) +
    \psi_{d+1,-1}C(d+1,0) \\
    \stackrel{\text{by Lemma \ref{lemma:mc_formula}}}{=} -&\sum_{i=0}^d\psi_{i,d-i} C(i, d-i) +\frac{2^{d+1}(-1)^{d+1} (d+1)}{(d+2)(d+3)} \text{.}
  \end{align*}
\end{proof}
Finally, we are ready to prove the main result of this paper.
\begin{theorem}
  \label{theorem:zero}
  \begin{equation*}
    \phi(0,d) = (-1)^{d-1}\text{.}
  \end{equation*}
\end{theorem}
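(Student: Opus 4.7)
The plan is to leverage the expansion of $\phi(n,d)$ in Lascoux polynomials given in \cite[Theorem 4.3]{complete_quadrics}, and then to show that substituting $n=0$ kills almost every summand via Corollary \ref{corollary:high_power_n_divides}.

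Concretely, I would begin by recalling the representation of the ML-degree as a finite linear combination
\begin{equation*}
  \phi(n,d) \;=\; \sum_{I} c_{I,d}\, LP_I(n),
\end{equation*}
where $I$ runs over a finite family of finite subsets of non-negative integers and the coefficients $c_{I,d}$ depend only on $d$. Evaluating at $n=0$, Corollary \ref{corollary:high_power_n_divides} forces $LP_I(0)=0$ for every $I$ with $|I|\geq 1$, because $\lceil |I|/2\rceil \geq 1$ already gives $n\mid LP_I(n)$. Hence only the empty-index summand survives, and the theorem reduces to the identity $\phi(0,d)=c_{\emptyset,d}\cdot LP_\emptyset(0)$.

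The value $LP_\emptyset(0)=\psi_\emptyset$ is the empty Pfaffian appearing in Proposition \ref{proposition:pfaffians} and equals $1$ under the natural convention. Everything therefore comes down to extracting $c_{\emptyset,d}$ and verifying that it equals $(-1)^{d-1}$.

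I expect this last step to be the main obstacle: it requires unpacking the combinatorial formula behind the Lascoux expansion of $\phi(n,d)$ and isolating the trivial-index coefficient. I would attempt it by induction on $d$ using the recursion \eqref{equation:psi_definition}, propagating the base case $\phi(0,1)=1$ through a recursion for $\phi(\cdot,d)$ derived from the Pfaffian formula of Proposition \ref{proposition:pfaffians}. A more direct alternative is to evaluate that Pfaffian at $n=0$: Corollaries \ref{corollary:high_power_n_divides} and \ref{corollary:high_power_n_plus_one_divides} force most of its entries to vanish, so the Pfaffian collapses to an almost anti-diagonal pattern whose value is, up to sign, a product of powers of $2$ that telescopes to $\pm 1$; the sign bookkeeping is where the alternating factor $(-1)^{d-1}$ ought to emerge.
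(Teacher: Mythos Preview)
Your reduction misfires at the very first step. The expansion of $\phi(n,d)$ actually used in the paper (from \cite[Corollary 3.6, Theorem 3.7]{complete_quadrics}) is
\[
  \phi(n,d)\;=\;\sum_{s:\binom{s+1}{2}\le d}\frac{s}{n}\sum_{\#I=s,\ \Sigma I=d-s}\psi_I\,LP_I(n),
\]
so the coefficients are \emph{not} constants $c_{I,d}$: there is an explicit $1/n$. Consequently, setting $n=0$ does not simply kill every $LP_I$ with $|I|\ge 1$. What Corollary~\ref{corollary:high_power_n_divides} gives is $n^{\lceil s/2\rceil}\mid LP_I(n)$, and after dividing by the extra $n$ the surviving terms are precisely those with $\lceil s/2\rceil=1$, i.e.\ $s=1$ and $s=2$. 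The ``empty-index'' summand you hope to isolate contributes nothing at all, since it carries the prefactor $s=0$.

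So the honest expression for $\phi(0,d)$ is
\[
  \phi(0,d)=\psi_{\{d-1\}}\widetilde{LP}_{\{d-1\}}(0)
  +2\sum_{0\le i<\frac{d-2}{2}}\psi_{\{i,d-2-i\}}\widetilde{LP}_{\{i,d-2-i\}}(0),
\]
and the second sum is genuinely nontrivial. The paper handles it by computing $\widetilde{LP}_{\{i\}}(0)=\frac{(-1)^i}{i+1}$ and $\widetilde{LP}_{\{i,j\}}(0)=MC(i,j)$ explicitly, extending $\psi$ to signed pairs so that a Pascal-type recursion $\psi_{a,b}=\psi_{a-1,b}+\psi_{a,b-1}$ holds, and then proving by induction the closed form $\sum_{i=0}^{d}\psi_{i,d-i}MC(i,d-i)=(-1)^d\frac{2^{d+1}-(d+2)}{d+2}$. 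Your proposed ``almost anti-diagonal Pfaffian that telescopes to $\pm1$'' does not materialise: the surviving object is a sum over two-element subsets weighted by Lascoux coefficients, not a single Pfaffian, and no simple product structure appears. The missing idea is precisely this two-term recursion for $\psi_{a,b}$ combined with the additive identity $MC(i+1,j)+MC(i,j+1)=-MC(i,j)$, which together drive the induction.
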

\begin{proof}
  By Equation \eqref{equation:formula_for_phi} and Corollary \ref{corollary:high_power_n_divides} we get
  \begin{align*}
    \phi(0,d) &= \psi_{\{d-1\}}\widetilde{LP}_{\{d-1\}}(0) + 2\sum_{0\leq i < \frac{d-2}{2}}
    \psi_{\{i,d-2-i\}} \widetilde{LP}_{\{i,d-2-i\}}(0) \\ &= \frac{(-2)^{d-1}}{d} + 2\sum_{0\leq i <
    \frac{d-2}{2}} \psi_{\{i,d-2-i\}} C(i, d-2-i)\text{.}
  \end{align*}
  Hence, it suffices to prove that
  \begin{equation*}
    2\sum_{0\leq i < \frac{d-2}{2}} \psi_{\{i,d-2-i\}} C(i, d-2-i) = (-1)^{d-2}\frac{2^{d-1} -
    d}{d}\text{,}
  \end{equation*}
  or equivalently, using the notation from the statement of Proposition
  \ref{proposition:identity}, that
  \begin{equation}
    \label{equation:final_induction}
    \sum_{i=0}^{d} \psi_{i,d-i} C(i, d-i) = (-1)^d \frac{2^{d+1} - (d+2)}{d+2}\text{.}
  \end{equation}
  We do this by induction. Assume we know \eqref{equation:final_induction} holds for a given $d$.
  Then by Proposition \ref{proposition:sum}
  \begin{align*}
    \sum_{i=0}^{d+1} \psi_{i,d+1-i} C(i, d+1-i) &= - \sum_{i=0}^d \psi_{i,d-i}C(i,d-i) +
    2^{d+1}\frac{(-1)^{d+1}(d+1)}{(d+2)(d+3)} \\
    &= (-1)^{d+1}\frac{2^{d+1} - (d+2)}{d+2} + 2^{d+1}\frac{(-1)^{d+1}(d+1)}{(d+2)(d+3)}\\
    &= (-1)^{d+1} \frac{2^{d+2} - (d+3)}{d+3}\text{,}
  \end{align*}
  as desired.
\end{proof}
\section{Conjecture about negative values}
\label{section:conjecture}
The question remains, what about smaller negative values of $n$? The following conjecture is motivated by analyzing many
examples.
\begin{conjecture}
  Let $n$ be a positive integer.
  \begin{equation*}
    \phi(-n,d) = (-1)^{d-1} 2^d\left( \sum_{i = 1}^{\frac{n(n+1)}{2}} \frac{\phi(n,i)}{2^i} \binom{d-1}{i-1}\right)\text{.}
  \end{equation*}
\end{conjecture}
Notice that the conjecture says that up to multiplication by the factor $(-1)^{d-1} 2^d$, the function $d \mapsto \phi(-n,d)$ is a
polynomial in $d$ whose coefficients depend on the values $\phi(n,i)$.

The number $\phi(-n,d)$ is always an integer. Following the combinatorial reciprocity program, one can wonder whether
$|\phi(-n,d)|$ has some interpretation in enumerative geometry, or possibly even in statistics.
\def\polhk#1{\setbox0=\hbox{#1}%
  {\ooalign{\hidewidth\lower1.5ex\hbox{`}\hidewidth\crcr\unhbox0}}}\def\dbar{\leavevmode\hbox
  to 0pt{\hskip.2ex\accent"16\hss}d}\def\cdprime{$''$} \def\cprime{$'$}
  \def\cprime{$'$} \def\cprime{$'$} \def\Dbar{\leavevmode\lower.6ex\hbox to
  0pt{\hskip-.23ex \accent"16\hss}D} \def\cprime{$'$} \def\cprime{$'$}
  \def\cdprime{$''$} \def\Dbar{\leavevmode\lower.6ex\hbox to 0pt{\hskip-.23ex
  \accent"16\hss}D} \def\cprime{$'$} \def\cprime{$'$} \def\cprime{$'$}
  \def\cprime{$'$} \def\Dbar{\leavevmode\lower.6ex\hbox to 0pt{\hskip-.23ex
  \accent"16\hss}D} \def\cprime{$'$} \def\cprime{$'$}

\end{document}